\documentclass[reqno]{amsart}
\usepackage{amsfonts,amsmath}
\usepackage{amssymb}
\usepackage[cp1251]{inputenc}
\usepackage{graphicx}
\newtheorem{theorem}{Theorem}
\newtheorem*{theorem*}{Theorem}

\theoremstyle{definition}

\newtheorem{rem}{Remark}

\UseRawInputEncoding
\usepackage[english]{babel}

\begin{document}

\title[An exact traveling waves solution] {An exact traveling waves solution  for a special class of semilinear equations}

\author[Dekhnich]{K.A. Dekhnich$^1$}

\address[1]{Mathematics and Mechanics Department, Lomonosov Moscow State University, Leninskie Gory, Moscow, 119991, Russian Federation}
\email{kirill.dekhnich@math.msu.ru}
\begin{abstract} We construct exact traveling-wave solutions for a semilinear parabolic equation with a specific nonlinearity. This nonlinearity arises, in particular, in voting models based on branching Brownian motion. Previously known results are special cases of those presented here.
\end{abstract}

\maketitle

\section{Introduction}

This paper is devoted to finding exact traveling-wave solutions to semilinear parabolic equations with a special type of nonlinearity associated with a probabilistic interpretation. Such equations were considered in a generalized form in the work of An, Henderson, and Ryzhik \cite{AnHendersonRyzhik2023} from the point of view of voting models on a branching Brownian motion tree \cite{Skorohod}, as a result of which a connection was established between branching Brownian motion and the semilinear equation
\begin{equation}\label{eq:0}
    \begin{split}
        &u_t=u_{xx}+f(u),\\
        &u(0,x)=g(x).
    \end{split}
\end{equation}

The nonlinearity that arises in the random outcome voting model, as shown in \cite{AnHendersonRyzhik2023}, is of the form
\[
    f(u)=\beta\sum_{n=2}^{N}p_n\left(\sum_{k=0}^{n}C_n^k\alpha_{k,n}u^k\left(1-u\right)^{n-k}-u\right),
\]
where $\beta>0$ is a parameter of the branching process, see \cite{AnHendersonRyzhik2023}, $p_k=P(N=k)\ge 0, \quad k\ge2, \quad \sum_{n=2}^{N}p_k=1$ is a set of probabilities corresponding to this branching process, and $0\le\alpha_{k,n}\le1,\text{ } 0\le k\le n, \text{ } n\ge2$ is a fixed set of probabilities, where
\[
\alpha_{0,n}=0, \quad \alpha_{n,n}=1, \quad n\ge2.
\]
\begin{equation*}
    p_k=P(N=k)\ge 0, k\ge2.
\end{equation*} 
The voting rules are set as follows: if a parent has $n$ children and $k$ of them voted for option 1, then the parent will vote for option 1 with probability $\alpha_{k,n}.$

One of the main results of \cite{AnHendersonRyzhik2023} is the following theorem  (p. 11).
\begin{theorem*}
    Let $f(u)$ be a polynomial of degree $N$ and $f(0)=f(1)=0.$ Then there exists a representation of the random outcome voting model in terms of $N$-ary branching Brownian motion corresponding to equation (\ref{eq:0}) with initial condition $g(x),$ where the function $g(x)$ is continuous and satisfies the condition $0\le g(x)\le 1$ for all $x\in \mathbb{R}^d.$ 
\end{theorem*}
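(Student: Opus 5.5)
The plan is to reduce the statement to an algebraic representation problem. Work with $N$-ary branching Brownian motion: each particle diffuses, branches at rate $\beta$ into exactly $N$ children, and votes are propagated upward by the rule $\alpha_{k,N}$. For such a process, the probability $u(t,x)$ that the root votes $1$ (with leaves voting $1$ independently with probability $g$ evaluated at their positions) satisfies (\ref{eq:0}) with
\[
f(u)=\beta\Bigl(\sum_{k=0}^{N}C_N^k\alpha_{k,N}u^k(1-u)^{N-k}-u\Bigr).
\]
This follows from the standard first-branching-time argument, as in \cite{AnHendersonRyzhik2023}: condition on the first branching time, use the Markov property and independence of the subtrees, and differentiate.

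So it suffices to show the following. Given a polynomial $f$ of degree at most $N$ with $f(0)=f(1)=0$, there exist $\beta>0$ and numbers $\alpha_{k,N}\in[0,1]$ with $\alpha_{0,N}=0$ and $\alpha_{N,N}=1$ such that
\[
\frac{f(u)}{\beta}+u=\sum_{k=0}^N C_N^k\alpha_{k,N}u^k(1-u)^{N-k}.
\]
The tool is the Bernstein basis. Since $u=\sum_k \frac{k}{N}C_N^k u^k(1-u)^{N-k}$, we can write $f(u)=\sum_k b_k C_N^k u^k(1-u)^{N-k}$, where $b_k$ are the Bernstein coefficients of $f$ in degree $N$. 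Because $\{u^k(1-u)^{N-k}\}$ is a basis of polynomials of degree $\le N$, this expansion exists and is unique. The condition $f(0)=0$ forces $b_0=0$, and $f(1)=0$ forces $b_N=0$. Hence
\[
\alpha_{k,N}=\frac{k}{N}+\frac{b_k}{\beta},
\]
which automatically gives $\alpha_{0,N}=0$ and $\alpha_{N,N}=1$.

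The main point is to secure $0\le\alpha_{k,N}\le1$ for $1\le k\le N-1$. In that range $k/N\in[1/N,\,1-1/N]$, so we have a margin of $1/N$ on each side. Choosing
\[
\beta\ge N\max_k|b_k|
\]
(and any $\beta>0$ if all $b_k$ vanish) gives $|b_k|/\beta\le 1/N$, which yields the bounds. If $f$ is meant to equal the model's nonlinearity exactly with a fixed $\beta$, then $\beta$ can instead be absorbed by a time rescaling of branching; I would present the free choice of $\beta$ as the parameter of the process.

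The remaining, routine step is the verification that the voting probability solves (\ref{eq:0}) with initial condition $g$. This needs continuity and boundedness of $g$ for the semigroup argument and uniqueness of bounded solutions, and requires $0\le g\le1$ so that $g$ can serve as a leaf-voting probability. I expect the only genuine obstacle to be that verification's justification (regularity and uniqueness). The algebraic step is straightforward once the Bernstein basis is used, with the key observation being that $f(0)=f(1)=0$ is exactly what is needed to kill the endpoint coefficients, where there is no margin.
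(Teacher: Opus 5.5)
There is no proof of this statement in the paper to compare with. It is quoted from An, Henderson and Ryzhik (p.~11) only to confirm that the nonlinearities of Theorem~1 have a voting-model interpretation.

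Your proof is correct, and it is the standard argument for this result:
\begin{itemize}
\item expand $f=\sum_k b_kC_N^ku^k(1-u)^{N-k}$ in the degree-$N$ Bernstein basis;
\item use $f(0)=f(1)=0$ to kill exactly the endpoint coefficients $b_0,b_N$, the only places where $k/N$ has no slack;
\item take $\beta\ge N\max_k|b_k|$, so that $\alpha_{k,N}=k/N+b_k/\beta\in[0,1]$;
\item conclude with the first-branching-time identity $u(t,x)=e^{-\beta t}E_xg(X_t)+\int_0^t\beta e^{-\beta s}E_xF(u(t-s,X_s))\,ds$, where $F(v)=\sum_kC_N^k\alpha_{k,N}v^k(1-v)^{N-k}$. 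This is the mild form of \eqref{eq:0} with $f=\beta(F(v)-v)$.
\end{itemize}

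There are two small points to fix.

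First, drop the fallback about absorbing a fixed $\beta$ by a time rescaling. Rescaling time alone also rescales the diffusion. The parabolic rescaling $v(t,x)=u(\lambda t,\sqrt{\lambda}x)$ does turn $f$ into $\lambda f$, but it replaces $g$ by $g(\sqrt{\lambda}\,\cdot)$. In fact the statement is false with $\beta$ fixed. For $N=2$, $\beta=1$ and $f=10u(1-u)$, uniqueness of the Bernstein expansion forces $\alpha_{1,2}=11/2$. So the freedom to take the branching rate large is essential, not a convenience. It is legitimate, since $\beta$ is a parameter of the process.

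Second, to obtain $u_{xx}$ rather than $\tfrac12u_{xx}$, the particles must move as $\sqrt2$ times a standard Brownian motion.
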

\noindent The nonlinearities considered below satisfy the conditions of this theorem.

For the first time, an equation with such conditions for nonlinearity was considered in the seminal work of Kolmogorov, Petrovskii and Piskunov \cite{Kolmogorov1937}, where the problem of the existence and stability of traveling waves was studied.
The existence of traveling waves for semilinear parabolic equations was also studied by Kanel \cite{Kanel1960}, \cite{Kanel1962}, \cite{Kanel1964} in the spirit of the original paper \cite{Kolmogorov1937}.

An exact analytical solution in the form of a traveling wave was obtained by Ablowitz and Zeppetella \cite{Ablowitz1979} for the Fisher equation \cite{Fisher}
$$u_t=u_{xx}+u\left(1-u\right)$$
with $c=5/\sqrt{6} \text{ and } u(-\infty)=1, \text{ } u(+\infty)=0,$ where $u=u(x-ct).$
In this case, the solution has the form
\begin{equation*}
    u(x-5/\sqrt{6}t)=\frac{1}{\left(1+re^{\frac{x-5/\sqrt{6}t}{\sqrt{6}}}\right)^2}, \quad r>0.
\end{equation*}

Kaliappan \cite{Kaliappan} considered a more general equation
$$u_t=u_{xx}+u-u^n, \,n> 1,$$
and obtained the solution
\begin{equation*}
    u(x-ct)=\frac{1}{\left(1+re^{k(x-ct)}\right)^{\frac{2}{n-1}}}, \quad r>0,
\end{equation*}
where
\begin{equation*}
    k=\frac{n-1}{\sqrt{2n+2}}, \quad c=\frac{n+3}{\sqrt{2n+2}}.
\end{equation*}

Interest in finding exact traveling-wave solutions remains due to their importance in modeling various biological and chemical phenomena. The book \cite{Petrovskii} is commonly cited as a review of the current state of the art; however, it turns out that new results can also be obtained in this traditional field.

It is worth noting a recent paper \cite{Kogan}, which presents an integration method that, among other things, allows one to obtain exact traveling-wave solutions to the KPP equation with a nonlinearity already studied by Ablowitz and Zeppetella \cite{Ablowitz1979}. In particular, this paper presents results from \cite{AnHendersonRyzhik2023} confirming the relevance of the class of nonlinearities considered in this paper.

The search for exact solutions to equations of this class can be carried out within the framework of a different approach, known as the method of simplest equation \cite{Kudryashow}. The essence of this method is that instead of choosing a specific ansatz, the solution is represented as a polynomial in a function satisfying some auxiliary ordinary differential equation (e.g., Riccati). This approach allows one to find solutions of a more complex structure, not necessarily monotone. In a recent paper \cite{McCue}, a method based on the use of Weierstrass elliptic functions was applied to the Fisher equation \cite{Fisher}.

\section{Main result}
\begin{theorem}
    There is an exact solution to the equation
    \begin{equation}\label{eq_main}
       u_t(t,x)=u_{xx}(t,x)-\sum_{j=2}^{n}p_ju^j(t,x)+u(t,x) 
    \end{equation}
    in the form
    $$u(x\mp ct)=\frac{1}{\left(1+p\exp\left({\pm k\left(x\mp ct\right)}\right)\right)^{\frac{2}{n-1}}}, \quad p>0,$$
    with
    \[
     c=\frac{n+2p_n+1}{\sqrt{p_n\left(2n+2\right)}}, \qquad k=\frac{\sqrt{p_n}\left(n-1\right)}{\sqrt{2n+2}},
    \]
    where $p_n=1$ and $p_j=0,$ $j=2,...,n-1$ in the case of even $n$ and $p_n>0,$ $p_n+p_{\frac{n+1}{2}}=1,$ $p_j=0, j=2,...,n-1; j\ne\frac{n+1}{2}$ in the case of odd $n.$
\end{theorem}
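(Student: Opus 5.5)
The plan is direct substitution of the ansatz into the traveling-wave ODE, organised so that everything collapses to a short polynomial identity. The case $u(x+ct)$ with $\exp(-k(x+ct))$ follows from the case $u(x-ct)$ with $\exp(k(x-ct))$ by the reflection $x\mapsto -x$, under which equation (\ref{eq_main}) is invariant. So it suffices to treat $\xi=x-ct$, $u=u(\xi)$. There (\ref{eq_main}) becomes
\[
u''+cu'+u-\sum_{j=2}^n p_j u^j=0 .
\]

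Put $m=\frac{2}{n-1}$ and $w=1+pe^{k\xi}$, so that $u=w^{-m}$ and $w'=k(w-1)$; since $p>0$ we have $w>0$ everywhere. The key observation is that $m(n-1)=2$, which gives
\[
u^n=w^{-m-2},
\]
and, when $n$ is odd, $\frac{n+1}{2}\,m=m+1$, which gives
\[
u^{\frac{n+1}{2}}=w^{-m-1}.
\]
So under the stated restrictions on the $p_j$, every nonlinear term is one of $w^{-m},w^{-m-1},w^{-m-2}$. This also explains the hypotheses. For even $n$, $\frac{n+1}{2}$ is not an integer, so only $p_n$ can survive, and $\sum p_j=1$ forces $p_n=1$. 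For odd $n$, exactly the two powers $u^n$ and $u^{(n+1)/2}$ fit the three-term structure.

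Next I compute the derivatives in terms of $w$:
\[
u'=-mk\left(w^{-m}-w^{-m-1}\right),\qquad
u''=mk^2\left(m w^{-m}-(2m+1)w^{-m-1}+(m+1)w^{-m-2}\right).
\]
Substituting and collecting the coefficients of the three powers yields the system
\[
m^2k^2-cmk+1=0,\qquad -(2m+1)mk^2+cmk-p_{\frac{n+1}{2}}=0,\qquad m(m+1)k^2-p_n=0,
\]
where $p_{\frac{n+1}{2}}:=0$ for even $n$. Adding the three equations gives $1-p_n-p_{\frac{n+1}{2}}=0$, which is exactly the normalization hypothesis. Hence only two equations are independent, and the system is consistent with two unknowns $k$ and $c$.

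The third equation, with $m(m+1)=\frac{2(n+1)}{(n-1)^2}$, gives
\[
k^2=\frac{p_n(n-1)^2}{2n+2},
\]
and I take the positive root. The first equation then gives
\[
c=\frac{m^2k^2+1}{mk}.
\]
Inserting $mk=\sqrt{p_n}\,\frac{2}{\sqrt{2n+2}}$ and $m^2k^2=\frac{2p_n}{n+1}$, this becomes
\[
c=\frac{n+2p_n+1}{\sqrt{p_n(2n+2)}},
\]
which matches the statement; for $p_n=1$ it reduces to Kaliappan's $c=\frac{n+3}{\sqrt{2n+2}}$ as a check. There is no genuine obstacle here. The only delicate point is the bookkeeping that reduces the power identities to three monomials, and the check that the second coefficient equation is implied by the other two through the normalization $p_n+p_{\frac{n+1}{2}}=1$.
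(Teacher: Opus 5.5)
Your proof is correct and follows essentially the same route as the paper: you substitute the ansatz, reduce everything to three monomials, and solve the coefficient system for $k$ and $c$, with the middle equation reducing to $p_n+p_{\frac{n+1}{2}}=1$. Your $w^{-m},w^{-m-1},w^{-m-2}$ are the paper's $l^{2(n-1)},l^{n-1},l^0$ once the factor $w^{m+2}$ is cleared; the only cosmetic differences are that you verify sufficiency under the stated restrictions instead of deriving them, and you obtain the $x+ct$ wave by the reflection $x\mapsto -x$ rather than by taking the negative roots $(k_2,c_2)$.
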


{\bf Proof.}
Equation \eqref{eq_main} for
 $u=u(z)$, $\,z= x-ct$, takes the form
\begin{equation}\label{eq:54}
    u''+cu'-\sum_{j=2}^{n}p_ju^j+u=0.
\end{equation}
As in [7], we seek a solution in the form of the following ansatz:
\begin{equation}\label{eq:55}
    u(z)=\frac{1}{\left(1+pe^{kz}\right)^\alpha}, \quad p>0,
\end{equation}
where $\alpha=\frac{2}{n-1}$ due to the balance of degrees. Then
\begin{equation*}\label{eq:56}
    u'(z)=-\frac{\alpha kpe^{kz}}{\left(1+pe^{kz}\right)^{\alpha+1}},\quad
    u''(z)=\frac{\alpha k^2pe^{kz}\left(\alpha pe^{kz}-1\right)}{\left(1+pe^{kz}\right)^{\alpha+2}},\quad
    u^n(z)=\frac{1}{\left(1+pe^{kz}\right)^{\alpha+2}}.
\end{equation*}
After substituting the latter computations and (\ref{eq:55})
into (\ref{eq:54}) we obtain 
\begin{eqnarray}\label{eq:60}
 &&\alpha \,k^2\, p\, e^{kz}\,(\alpha p \,e^{kz}-1)-c \alpha k p\, e^{kz} (1+p\, e^{kz})-p_n  \nonumber\\&& -\sum_{j=2}^{n-1}p_j\left(1+p\, e^{kz}\right)^\frac{2(n-j)}{n-1}+\left(1+e^{kz}\right)^2=0.
\end{eqnarray}
To eliminate fractional powers, we denote
$
l=\left(1+pe^{kz}\right)^\frac{1}{n-1},
$
from which it follows that $$pe^{kz}=l^{n-1}-1.$$ Substituting this equality into (\ref{eq:60}), we get
\begin{eqnarray*}
&&\alpha k^2\left(\alpha l^{2(n-1)}-\left(2\alpha+1\right)l^{n-1}+\left(\alpha+1\right)\right)-c \alpha kl^{2(n-1)}+c\alpha kl^{n-1}\\
&&-p_n-\sum_{j=2}^{n-1}p_jl^{2(n-j)}+l^{2(n-1)}=0.
\end{eqnarray*}
Let us equate the coefficients of the powers $l$ to zero. For $l^0$ we get $\alpha\left(\alpha+1\right)k^2-p_n=0$, that is
\begin{equation}\label{eq:61}
    k^2=\frac{p_n}{\alpha\left(\alpha+1\right)},
\end{equation}
from which it follows that $p_n>0.$ Considering that $\alpha=\frac{2}{n-1}$, we obtain
\begin{equation}\label{eq:62}
    k_1=\frac{\left(n-1\right)\sqrt{p_n}}{\sqrt{2n+2}}, \quad k_2=-\frac{\left(n-1\right)\sqrt{p_n}}{\sqrt{2n+2}}.
\end{equation}
For the degree $l^{2(n-1)}$ not contained in the sum
\begin{equation}\label{eq:sum}
     \sum_{j=2}^{n-1}p_jl^{2(n-j)},
\end{equation}
we get $\alpha^2k^2-c\alpha k+1=0$, that is
\begin{equation}\label{eq:64}
     c=\frac{\alpha^2k^2+1}{\alpha k}.
\end{equation}
Now we substitute (\ref{eq:62}) here and get two cases
\[
c_1=\frac{n+2p_n+1}{\sqrt{p_n\left(2n+2\right)}}, \quad c_2=-\frac{n+2p_n+1}{\sqrt{p_n\left(2n+2\right)}}.
\]
For power $l^{n-1}$, the factor can contain a term contained in the sum \eqref{eq:sum} if $n-1=2(n-j),$, that is, $j=\frac{n+1}{2}.$ The number $j$ must be an integer, which is possible only for odd $n$. Therefore, two cases must be considered.
\\
\par
\textbf{a) n is even.} Then, equating the coefficient of $l^{n-1}$ to zero, we have
$$
-\alpha k^2\left(2\alpha+1\right)+c\alpha k=0.
$$
Taking into account (\ref{eq:61}) and (\ref{eq:64}), we obtain
$
\frac{p_n}{\alpha(\alpha+1)}\alpha(\alpha+1)=1,
$
whence
$ p_n=1.$
Since other degrees occur in the sum only once, then $p_j=0,$ $j=2,...,n-1.$
\\
\par
\textbf{b) n is odd;} In this case,
\[
-\alpha k^2\left(2\alpha+1\right)+c\alpha k-p_{\frac{n+1}{2}}=0.
\]
Taking into account (\ref{eq:61}) and (\ref{eq:64}), we obtain
$
\frac{p_n}{\alpha(\alpha+1)}\alpha(\alpha+1)=1-p_{\frac{n+1}{2}}, 
$
that is $p_n+p_{\frac{n+1}{2}}=1.$
Since other powers occur in the sum \eqref{eq:sum} only once, then $p_j=0, j=2,...,n-1; j\ne\frac{n+1}{2}.$
\par
Choosing $(k_1,c_1)$ or $(k_2,c_2)$ results in traveling waves satisfying $u(-\infty)=1,$ $u(+\infty)=0$ or $u(-\infty)=0,$ $u(+\infty)=1$, respectively.
The theorem is proven.

\medskip
\begin{rem} It should be noted that in the case of even $n$ the formulas obtained in Theorem 1 coincide with the results of \cite{Ablowitz1979} and \cite{Kaliappan}, whereas for odd $n$ new possibilities for exact solutions appear.
\end{rem}

\medskip

Note that the described method can also be applied to other nonlinearities, both local (for example, Allan-Cahn nonlinearities \cite{Cahn}) and nonlocal (for example, Gourley \cite{Gourley}).

\section*{Acknowledgments}
Supported by ``Vega Institute Foundation''.

\end{document}